\title{On the representability of the bi-uniform 
matroid\thanks{Research was (partially)
completed while the first author was 
visiting the Institute for Mathematical Sciences,
National University of Singapore in 2011.}}
\author{Simeon Ball\footnotemark[2]
\and Carles Padr\'o\footnotemark[3]
\and Zsuzsa Weiner\footnotemark[4]
\and Chaoping Xing\footnotemark[3]}
\begin{document}

\maketitle

\renewcommand{\thefootnote}{\fnsymbol{footnote}}

\footnotetext[2]{Universitat 
Polit\`ecnica de Catalunya, Barcelona, Spain.
The author acknowledges
the support of the project MTM2008-06620-C03-01
of the Spanish Ministry of Science and Education 
and the project 2009-SGR-01387
of the Catalan Research Council.}
\footnotetext[3]{Nanyang Technological 
University, Singapore.
The work of these authors
was supported by the Singapore National
Research Foundation under 
Research Grant NRF-CRP2-2007-03.}
\footnotetext[4]{E\"otv\"os Lor\'and University,
Budapest, Hungary.
The author was
supported by K 81310 grant and by the 
ERC grant No. 227701 DISCRETECONT.}

\renewcommand{\thefootnote}{\arabic{footnote}}

\begin{abstract}
Every bi-uniform matroid is
representable over all sufficiently large fields.
But it is not known exactly over which finite fields they
are representable, and the existence of
efficient methods to find a representation
for every given bi-uniform matroid 
has not been proved.
The interest of these problems is due to their
implications to secret sharing.
The existence of 
efficient methods to find representations for all
bi-uniform matroids is proved here for the first time.
The previously known efficient constructions
apply only to a particular class of bi-uniform matroids,
while the known general constructions were not proved to be efficient.
In addition, our constructions provide 
in many cases representations
over smaller finite fields.
\end{abstract}

\begin{keywords}
Matroid theory, representable matroid,
bi-uniform matroid, secret sharing
\end{keywords}

\begin{AMS}
05B35, 94A62
\end{AMS}

\pagestyle{myheadings}
\thispagestyle{plain}
\markboth{S. BALL, C. PADR\'O, Z. WEINER, AND 
C. XING}{ON THE 
REPRESENTABILITY OF THE BI-UNIFORM MATROID}

\section{Introduction}

Given a class of representable matroids, 
the following are two basic 
questions about the class. 
Over which fields are the 
members of the class representable? 
Are there efficient algorithms to 
construct representations 
for every member of the class?
Here an algorithm is efficient
if its running time is
polynomial in the size of the ground set.
For instance, every transversal matroid is
representable over all sufficiently large
fields~\cite[Corollary~12.2.17]{Oxl92}, 
but it is not known exactly over which 
fields they are representable, and the
existence of efficient algorithms to
construct representations is an
open problem too.

The interest for these problems has been mainly motivated by
their connections to coding theory and cryptology, 
mainly to secret sharing.
Determining over which fields 
the uniform matroids are representable
is equivalent to solving the
Main Conjecture for Maximum Distance Separable Codes.
For more details, and a proof of this conjecture in the prime case,
see~\cite{Ball2011}, and for further information
on when the conjecture is known to hold, see~\cite[Section~3]{HS2001}.
As a consequence of the results by
Brickell~\cite{Bri89}, every representation
of a matroid $M$ over a finite field
provides ideal linear secret sharing schemes for
the access structures that are ports of the matroid $M$.
Because of that, the representability of certain classes
of matroids is closely connected to the search for efficient
constructions of secret sharing schemes
for certain classes of access structures.
The reader is referred to~\cite{MaPa10}
for more information about
secret sharing and its connections to matroid theory.

Several constructions of ideal linear 
secret sharing schemes
for families of relatively simple
access structures with interesting 
properties for the applications have been
proposed~\cite{BTW08,Bri89,FaPa10,FPXY11,HeSa06,Ng06,PaSa00,Sim88,Tas07,TaDy06}.
They are basic and natural generalizations 
of Shamir's~\cite{Sha79}
threshold secret sharing scheme.
A unified approach to all those proposals 
was presented in~\cite{FMP07}.
As a consequence, the open questions about the existence
of such secret sharing schemes for some sizes of the secret value
and the possibility of constructing them efficiently
are equivalent to determining the representability
of some classes of multi-uniform matroids.
See~\cite{FaPa11,FPXY11} 
for more information on this line of work.

In this paper, we analyze the representability
of the bi-uniform matroids.
They were introduced by Ng and Walker~\cite{NgWa01},
but ideal secret sharing schemes for 
the access structures that are determined 
by them were previously presented in~\cite{PaSa00}.
Bi-uniform matroids are defined in terms of their symmetry properties,
specifically the number of clonal classes,
a concept introduced in~\cite{GOVW98}.
Two elements in the ground set of a matroid
are said to be \emph{clones\/} if the map that interchanges them
and fixes all other elements is an automorphism of the matroid.
Being clones is clearly an equivalence relation,
and its equivalence classes are called the \emph{clonal classes\/} of the matroid.
Uniform matroids are precisely those having only one clonal class.
A matroid is said to be \emph{bi-uniform\/} if
it has at most two clonal classes.
Of course, this definition can be generalized to
\emph{$m$-uniform matroids\/} for every positive integer $m$.
A bi-uniform matroid is determined by
its rank, the number of elements in each clonal class,
and the ranks of the two clonal classes,
which are called the \emph{sub-ranks\/} of the bi-uniform matroid.

It is not difficult to check
that every bi-uniform matroid is a transversal matroid, 
and hence it is representable over all
sufficiently large fields.
Moreover, as a consequence of the results in~\cite{FMP07},
every bi-uniform matroid is representable
over all fields with at least ${N \choose k}$ elements,
where $N$ is the size of the ground set
and $k$ is the rank.
The same result applies to tri-uniform 
matroids~\cite{FMP07}, 
but it does not apply to $4$-uniform matroids
because the Vamos matroid is not
representable~\cite[Proposition 6.1.10]{Oxl92}.

Even though the proof in~\cite{FMP07} is constructive,
no efficient method to find representations
for the bi-uniform matroids can be derived from it.
A method to construct a representation
for every bi-uniform matroid was presented 
by Ng~\cite{Ng03}, but it was not proved to be efficient.
Efficient methods to find representations
for the bi-uniform matroids in which
one of the sub-ranks is equal to the rank
can be derived from the
constructions of ideal hierarchical 
secret sharing schemes 
by Brickell~\cite{Bri89} and by Tassa~\cite{Tas07}.
These constructions are analyzed
in Section~\ref{pt:related}.

In this work, we prove for the first time
that there exist efficient algorithms
to find representations for \emph{all\/}
bi-uniform matroids.
In addition, our constructions provide representations
over finite fields that are in many cases
smaller than the ones used in~\cite{Bri89,Ng03,Tas07}.
A detailed comparison is given in Section~\ref{pt:related}.

More specifically, we present three different representations of bi-uniform matroids.
All of them can be obtained in time polynomial in the size of the ground set.
An important parameter in our discussions is  
$d = m + \ell -k$,
where $k$ is the rank of the matroid while
$m$ and $\ell$ are its sub-ranks.
The cases $d = 0$ and $d = 1$ are reduced to the
representability of the uniform matroid.
Our first construction (Theorem~\ref{st:rep4d2}) corresponds to the case $d = 2$,
and we prove that every such bi-uniform matroid
is representable over $\mathbb{F}_q$ if $q$ is odd
and every clonal class has at most $(q-1)/2$ elements.
The other two constructions apply to the general case,
and they are both based on a family of linear evaluation codes.
Our second construction (Theorem~\ref{st:repextfield})
provides a representation of the bi-uniform
matroid over  ${\mathbb F}_{q_0^s}$,
where $s > d(d-1)/2$ and $q_0$ is a prime power
larger than the size of each clonal class.
Finally, we present a third construction in Theorem~\ref{st:repprimefield}.
In this case, if $m \ge \ell$, 
a representation of the bi-uniform matroid
is obtained over every prime field $\mathbb{F}_p$ 
with $p > K^h$, where
$K$ is larger than half the number of elements in
each clonal class and
$h =  md(1 + d(d-1)/2)$.

\section{Related work}
\label{pt:related}

The existence of ideal secret sharing schemes
for the so-called bipartite and tripartite access structures 
was proved in~\cite{PaSa00} and in~\cite{FMP07}, respectively.
These proofs are constructive and, in particular, they
provide a method to find representations 
for all bi-uniform matroids.
Such a representation can be found
over every field with at least
${N \choose k}$ elements,
where $N$ is the size of the ground set
and $k$ is the rank.
This method is not efficient
because exponentially many
determinants have to be computed 
to find a valid representation.

This problem is avoided in the
method proposed by Ng~\cite{Ng03}, 
which provides a representation
for every given bi-uniform matroid.
Specifically, Ng gives a representation for 
the bi-uniform matroid 
with rank $k$ and sub-ranks $m,\ell$
over every finite field of the form
$\mathbb{F}_{q_0^s}$, where $q_0 > 14$,
each clonal class has at most $q_0$ elements,
and $s$ is at least $k$
and co-prime with $d = m + \ell - k$.
This method may be efficient, 
but this fact is not proved in~\cite{Ng03}.
In addition, the degree $s$ of the extension field
depends on the rank $k$, while in our
efficient construction
in Theorem~\ref{st:repextfield}, this degree depends 
only on $d$.
Therefore, if $d$ is small compared to $k$, 
our construction works over smaller fields.
 
Efficient methods to construct 
ideal hierarchical secret sharing schemes were 
given by Brickell~\cite{Bri89} and by Tassa~\cite{Tas07}.
When applied to some particular cases, these
methods provide representations for bi-uniform
matroids in which one of the sub-ranks is equal to the rank.

Brickell's construction provides a representation
for every such bi-uniform matroid 
over fields of the form $\mathbb{F}_{q_0^s}$,
where $q_0$ is a prime power larger 
than the size of each clonal class
and $s$ is at least the square of the rank of the matroid.
An irreducible polynomial 
of degree $s$ over $\mathbb{F}_{q_0}$
has to be found,
but this can be done in time polynomial in $q_0$ and $s$
by using the algorithm given by Shoup~\cite{Sho90}.
Therefore, a representation can be found in
time polynomial in the size of the ground set.
Clearly, the size of the field is much smaller in the
representations that are obtained by the method described
in Theorem~\ref{st:repextfield}.

Representations for those bi-uniform matroids
are efficiently obtained from Tassa's construction over 
prime fields $\mathbb{F}_p$
with $p$ larger than 
$N^{(k-1)(k-2)/2}$, where $N$ 
is the number of elements in the ground set.
If $d$ is small compared to $n$, the size of the field 
in our construction (Theorem~\ref{st:repprimefield})
is smaller.

Representations for bi-uniform matroids in which
one of the sub-ranks is equal to the rank of
the matroid and the other one is equal to $2$
are obtained from the constructions of
ideal hierarchical secret sharing schemes in~\cite{BeWe93}.
These are representations over $\mathbb{F}_q$,
where the size of the ground set
is at most $q+1$ and the size of each clonal class is around $q/2$.
These parameters are similar
to the ones in Theorem~\ref{st:rep4d2},
but our construction is more general.

\section{The bi-uniform matroid}
\label{pt:defs}

A {\em matroid} $M=(E,F)$ is a pair in which $E$
is a finite set, called the \emph{ground set\/}, and
$F$ is a nonempty set of subsets of $E$,
called {\em independent sets}, such that
\begin{enumerate}
\item
every subset of an independent set is an independent subset, and
\item
for all $A \subseteq E$, all maximal independent
subsets of $A$ have the same cardinality,
called the {\em rank of $A$} and denoted $r(A)$.
\end{enumerate}
A {\em basis} $B$ of $M$ is a maximal independent set.
Obviously all bases have the same cardinality, which is called
the \emph{rank of $M$}.
If $E$ can be mapped to a subset of vectors of a vector space over a field ${\mathbb K}$
so that $I \subseteq E$ is an independent set if and only if the vectors assigned
to the elements in $I$ are linearly independent, then the matroid is said to be
{\em representable over ${\mathbb K}$}.

The independent sets of the {\em uniform matroid} of rank $k$
are all the subsets $B$ of the set $E$ with the property that $|B| \leq k$.
If the uniform matroid is representable over a field ${\mathbb K}$ then there is a map
$$
f \ : \ E \rightarrow \ {\mathbb K}^k
$$
such that $f(E)$ is a set of vectors with the property that every subset
of $f(E)$ of size $k$ is a basis of ${\mathbb K}^k$.

For positive integers $k,m,\ell$ with
$1 \le m, \ell \le k$ and $m + \ell \geq k$,
and a partition $E = E_1 \cup E_2$ of the ground set with
$|E_1| \geq m$ and $|E_2| \geq \ell$,
the independent sets of the {\em bi-uniform matroid of rank $k$ and sub-ranks $m$, $\ell$}
are all the subsets $B$ of the ground set with the property that
$|B | \leq k$, $|B \cap E_1| \leq m$ and $|B \cap E_2| \leq \ell$.
Since the maximal independent subsets of $E_1$
have $m$ elements, $r(E_1) = m$.
Similarly, $r(E_2) = \ell$.

If the bi-uniform matroid is representable over a field ${\mathbb K}$ then there is a map
$$
f \ : \ E \rightarrow \ {\mathbb K}^k
$$
such that $f(E)$ is a set of vectors with the property that every subset
$D$ of $f(E)$ of size $k$ with
$|D \cap f(E_1)| \leq m$ and $|D \cap f(E_2)| \leq \ell$
is a basis of ${\mathbb K}^k$.
The dimensions of
$\langle f(E_1) \rangle$
and $\langle f(E_2) \rangle$ are
$m = r(E_1)$ and $\ell = r(E_2)$, respectively.
Thus, if the bi-uniform matroid is representable over ${\mathbb K}$ then we can construct a set
$S \cup T$ of vectors of ${\mathbb K}^k$ such that
$\dim(\langle S \rangle )=m$ and $\dim(\langle T \rangle )=\ell$,
with the property that every subset $B$ of $S \cup T$ of size $k$
with $|B \cap S| \leq m$ and $|B \cap T| \leq \ell$ is a basis.

\section{Necessary conditions}
\label{pt:neccond}

We present here some necessary conditions for a
bi-uniform matroid to be representable over
a finite field ${\mathbb F}_q$.

The following lemma implies that restricting a representation of the bi-uniform matroid
on $E = E_1 \cup E_2$, one gets a representation of the
uniform matroid on $E_1$ of rank $m$ and the uniform matroid on $E_2$ of rank $\ell$.
Therefore, the known necessary conditions for the
representability of the uniform matroid over ${\mathbb F}_q$
can be applied to the bi-uniform matroid.

\begin{lemma}
If $f$ is a map from $E$ to ${\mathbb K}^k$ which gives a representation of the bi-uniform matroid
of rank $k$ and sub-ranks $m$ and $\ell$ then $f(E_1)$ has the property that every
subset of $f(E_1)$ of size $m$ is a basis of $\langle f(E_1) \rangle$.
Similarly, $f(E_2)$ has the property that every subset of $f(E_2)$ of size $\ell$
is a basis of $\langle f(E_2) \rangle$.
\end{lemma}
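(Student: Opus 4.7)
The plan is to argue that any $m$-subset $A \subseteq E_1$ can be extended to a $k$-subset $B \subseteq E$ satisfying the admissibility conditions of the bi-uniform matroid, so that $f(B)$ is forced to be a basis of ${\mathbb K}^k$; then $f(A) \subseteq f(B)$ is linearly independent, which gives what we want. The symmetric argument handles $f(E_2)$.

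More concretely, I would proceed as follows. Fix any $A \subseteq E_1$ with $|A| = m$. The numerical hypotheses $m + \ell \ge k$ and $|E_2| \ge \ell$ yield $k - m \le \ell \le |E_2|$, so we may choose some $C \subseteq E_2$ with $|C| = k - m$. Set $B = A \cup C$. Then $|B| = k$, $|B \cap E_1| = m$, and $|B \cap E_2| = k - m \le \ell$, so $B$ meets the constraints placed on subsets representing bases in the bi-uniform matroid. By the hypothesis that $f$ is a representation, $f(B)$ is a basis of ${\mathbb K}^k$; in particular, the images of the elements of $B$ are $k$ linearly independent vectors, so $f$ is injective on $B$ and $f(A)$ consists of $m$ linearly independent vectors.

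It remains to observe that $\dim \langle f(E_1) \rangle = m$, which is part of the standing description of the representation recalled in the excerpt (each sub-rank is the dimension of the span of the corresponding clonal class). Since $f(A)$ is a linearly independent set of $m$ vectors contained in $\langle f(E_1) \rangle$, it forms a basis of $\langle f(E_1) \rangle$. The argument for $f(E_2)$ is entirely analogous, extending any $\ell$-subset of $E_2$ by $k - \ell \le m$ elements of $E_1$.

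There is no real obstacle here; the only point that requires a moment of care is verifying that the extension set $C$ actually exists, and this is exactly where the hypotheses $m + \ell \ge k$ and $|E_2| \ge \ell$ (and the symmetric $|E_1| \ge m$) built into the definition of the bi-uniform matroid are used. Consequently, every known necessary condition for the uniform matroid of rank $m$ on $|E_1|$ points (respectively, rank $\ell$ on $|E_2|$ points) to be representable over ${\mathbb F}_q$ transfers automatically to the bi-uniform setting.
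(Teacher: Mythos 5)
Your proof is correct and is essentially the paper's argument in direct rather than contrapositive form: both rest on extending an $m$-subset of $E_1$ by $k-m$ elements of $E_2$ (legal since $k-m\le\ell\le|E_2|$) to force a basis of ${\mathbb K}^k$. The extra details you supply (existence of the extension set, $\dim\langle f(E_1)\rangle=m$) are accurate but do not change the approach.
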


\begin{proof}
If $L'$ is a set of $m$ vectors of $f(E_1)$ which are linearly dependent then
$L' \cup L$, where $L$ is a set of $k-m$ vectors of $f(E_2)$,
is a set of $k$ vectors of $f(E)$ which do not form a basis of ${\mathbb K}^k$.
\end{proof}

The \emph{dual\/} of a matroid $M$ is the matroid $M^*$
on the same ground set such that its bases
are the complements of the bases of $M$.
Given a representation of $M$ over $\mathbb{K}$,
simple linear algebra operations provide
a representation of $M^*$ over the same
field~\cite[Section~2.2]{Oxl92}.
In particular, if  $\mathbb{K}$ is finite,
a representation of $M^*$ can be
efficiently obtained from a representation of $M$.
By the following proposition,
the dual of a bi-uniform matroid is a bi-uniform matroid
with the same partition of the ground set.

\begin{proposition}
The dual of the bi-uniform matroid of rank $k$ and sub-ranks $m$ and $\ell$
on the ground set $E = E_1 \cup E_2$ is the
bi-uniform matroid of rank $k^* = |E_1|+|E_2|-k$
and sub-ranks $m^* = |E_1| + \ell - k$ and $\ell^* = |E_2|+ m -k$.
\end{proposition}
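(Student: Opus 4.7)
The plan is to argue directly from the definition of the dual matroid, which says that the bases of $M^*$ are exactly the complements in $E$ of the bases of $M$, and then to check that the complemented basis family matches the basis family of the bi-uniform matroid with the claimed parameters. Write $n_i = |E_i|$ for $i = 1, 2$. By Section~\ref{pt:defs}, the bases of the bi-uniform matroid $M$ are the sets $B \subseteq E$ with $|B| = k$, $|B \cap E_1| \le m$, and $|B \cap E_2| \le \ell$. For such a $B$, its complement $C = E \setminus B$ satisfies $|C| = n_1 + n_2 - k = k^*$ and $|C \cap E_i| = n_i - |B \cap E_i|$.

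Next I translate each constraint on $B$ into one on $C$. The inequality $|B \cap E_1| \le m$ becomes $|C \cap E_1| \ge n_1 - m$, and similarly $|C \cap E_2| \ge n_2 - \ell$. Using $|C \cap E_1| + |C \cap E_2| = k^*$, a lower bound on one intersection turns into an upper bound on the other: $|C \cap E_2| \ge n_2 - \ell$ yields $|C \cap E_1| \le k^* - (n_2 - \ell) = n_1 + \ell - k = m^*$, and symmetrically $|C \cap E_2| \le n_2 + m - k = \ell^*$. Conversely, any $C$ with $|C| = k^*$, $|C \cap E_1| \le m^*$, $|C \cap E_2| \le \ell^*$ has complement $B$ satisfying the bi-uniform basis constraints for $M$. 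Thus complementation is a bijection between the bases of $M$ and the sets $C \subseteq E$ with $|C| = k^*$, $|C \cap E_1| \le m^*$, $|C \cap E_2| \le \ell^*$, which is exactly the basis family of the bi-uniform matroid on $E = E_1 \cup E_2$ with rank $k^*$ and sub-ranks $m^*, \ell^*$.

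Finally I must check that the triple $(k^*, m^*, \ell^*)$ together with the partition $(E_1, E_2)$ satisfies the admissibility conditions of Section~\ref{pt:defs}, namely $m^*, \ell^* \le k^*$, $m^* + \ell^* \ge k^*$, $|E_1| \ge m^*$ and $|E_2| \ge \ell^*$. These all reduce to elementary rearrangements of the original hypotheses $m + \ell \ge k$, $m, \ell \le k$, $n_1 \ge m$, $n_2 \ge \ell$: for instance $m^* + \ell^* - k^* = m + \ell - k \ge 0$, $k^* - m^* = n_2 - \ell \ge 0$, and $n_1 - m^* = k - \ell \ge 0$. There is no substantive obstacle in this argument—the proof is pure combinatorial bookkeeping on intersection sizes via complementation, so the only care required is verifying the parameter inequalities and confirming that the translation of constraints on $B$ and on $C$ is tight in both directions.
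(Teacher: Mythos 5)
Your proof is correct, but it follows a different route from the paper's. You work directly with the basis families: complementation sends each basis $B$ of $M$ to a set $C$ with $|C|=k^*$, and you translate the upper bounds $|B\cap E_i|$ into lower bounds on $|C\cap E_i|$, which via $|C\cap E_1|+|C\cap E_2|=k^*$ become exactly the upper bounds $|C\cap E_1|\le m^*$ and $|C\cap E_2|\le \ell^*$; you then check that $(k^*,m^*,\ell^*)$ is an admissible parameter triple so that this family really is the basis family of a bi-uniform matroid. The paper instead argues in two steps: since $M$ and $M^*$ have the same automorphism group, the dual of a bi-uniform matroid is bi-uniform for the same partition, and the parameters are then read off from the rank formula $r^*(A) = |A| - r(E) + r(E\setminus A)$ applied to $A=E$, $A=E_1$, $A=E_2$. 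Your argument is more elementary and self-contained (it needs only the definition of the dual via complemented bases), at the cost of having to verify both directions of the equivalence and the admissibility inequalities; the paper's argument is shorter and structurally cleaner, since the automorphism observation settles ``bi-uniform with the same clonal classes'' at once and the rank formula delivers the parameters without any case analysis. One small point worth noting in your write-up: strictly speaking you should also observe that $k^*\ge 1$ and $m^*,\ell^*\ge 1$ in the non-degenerate cases (the paper's standing hypotheses exclude $m=|E_1|$ and $\ell=|E_2|$ simultaneously with $m+\ell=k$), but this is a boundary issue that the paper's own proof also leaves implicit.
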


\begin{proof}
Clearly, a matroid and its dual have the same automorphism group.
This implies that the dual of a bi-uniform matroid is bi-uniform
for the same partition of the ground set.
The values for the rank and the sub-ranks of $M^*$
are derived from the formula that relates the
rank function $r$ of matroid $M$
to the rank function $r^*$ of its dual $M^*$.
Namely,
\(
r^*(A) = |A| - r(E) + r(E \setminus A)
\)
for every $A \subseteq E$~\cite[Proposition~2.1.9]{Oxl92}.
\end{proof}

Clearly, $k = m = \ell$ if and only if
$m^* = |E_1|$ and $\ell^* = |E_2|$,
and in this case both $M$ and $M^*$
are uniform matroids.
We assume from now on that
$m < k$ or $\ell < k$
and that
$m < |E_1|$ or $\ell < |E_2|$,

The results in this paper indicate that
the value $d = m + \ell - k$, which is equal to
the dimension of $\langle S \rangle \cap \langle T \rangle$,
is maybe the most influential parameter
when studying the representability
of the bi-uniform matroid over finite fields.
Observe that the value of this parameter is the same
for a bi-uniform matroid $M$ and for its dual $M^*$.
If $d = 0$, then the problem reduces to the representability of the uniform matroid.
Similarly, if $d = 1$ then, by adding to $S \cup T$ a nonzero vector
in the one-dimensional intersection of $\langle S \rangle$ and $\langle T \rangle$,
the problem again reduces to the representability of the uniform matroid.
From now on, we assume that $d = m + \ell - k \geq 2$.

\begin{proposition}
If $k \leq m+\ell-2$ and the bi-uniform matroid of rank $k$
and sub-ranks $m, \ell$ is
representable over ${\mathbb F}_q$, then $|E| \leq q+k-1$.
\end{proposition}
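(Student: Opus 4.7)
The plan is to adapt the classical pencil-of-hyperplanes argument, which bounds the size of a $k$-arc in $PG(k-1,q)$, to the bi-uniform setting. Fix a representation $f\colon E\to\mathbb{F}_q^k$, write $S=f(E_1)$ and $T=f(E_2)$, and recall that $\dim\langle S\rangle=m$, $\dim\langle T\rangle=\ell$, and $\dim(\langle S\rangle\cap\langle T\rangle)=d=m+\ell-k\ge 2$. The goal is to produce an independent subset $A\subseteq E$ of size $k-2$ such that (i) $A\cup\{e\}$ is independent for every $e\in E\setminus A$ and (ii) $A\cup\{e,e'\}$ is a basis for every pair of distinct $e,e'\in E\setminus A$. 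Given such an $A$, the hyperplanes through $\Lambda:=\langle f(A)\rangle$ form a pencil of size $q+1$; condition (i) ensures $f(e)\notin\Lambda$, so $e$ determines a unique hyperplane $\phi(e)$ of the pencil, while (ii) forces $\phi$ to be injective, and this yields $|E|\le(k-2)+(q+1)=q+k-1$.

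The natural choice is $A=A_1\cup A_2$ with $A_1\subseteq E_1$ of size $m-2$ and $A_2\subseteq E_2$ of size $k-m=\ell-d$. Both sizes are non-negative because $d\ge 2$ forces $m,\ell\ge 2$, and the subsets exist since $|E_1|\ge m$ and $|E_2|\ge\ell$. To verify independence of $A$, observe that any $m$-subset of $S$ together with $A_2$ forms a $k$-subset of $E$ whose sub-rank distribution $(m,\,k-m)$ satisfies both constraints; by representability it is a basis of $\mathbb{F}_q^k$, and this forces $\langle f(A_2)\rangle\cap\langle S\rangle=0$. Since $\langle f(A_1)\rangle\subseteq\langle S\rangle$, we obtain $\langle f(A_1)\rangle\cap\langle f(A_2)\rangle=0$, so $\dim\Lambda=k-2$.

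For (i), if $e\in E\setminus A$ then $A\cup\{e\}$ has at most $m-1$ elements in $E_1$ and at most $k-m+1\le\ell$ in $E_2$, so the bi-uniform axiom makes it independent and $f(e)\notin\Lambda$. For (ii), a case analysis on the locations of $e$ and $e'$ shows that $A\cup\{e,e'\}$ always meets both sub-rank constraints; the tightest case is $e,e'\in E_2$, which contributes $k-m+2$ elements to $E_2$, and $k-m+2\le\ell$ is precisely the hypothesis $k\le m+\ell-2$. Hence $A\cup\{e,e'\}$ is a basis, and any equality $\phi(e)=\phi(e')$ would place it inside a $(k-1)$-dimensional hyperplane, a contradiction. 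So $\phi$ is injective and the desired bound follows.

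The main technical point is the choice of the split $(|A_1|,|A_2|)=(m-2,\,k-m)$ in the first step: a slightly different distribution fails to leave enough slack in both sub-rank inequalities simultaneously when two new elements are added, and it is exactly in the case of two new elements from $E_2$ that the hypothesis $k\le m+\ell-2$ is needed. Everything else is routine bookkeeping together with standard linear algebra.
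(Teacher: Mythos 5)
Your proof is correct and is essentially the paper's argument: both count the $q+1$ hyperplanes through a $(k-2)$-dimensional subspace spanned by a suitably chosen independent set of $k-2$ ground-set elements, invoking $k\le m+\ell-2$ exactly where you do, namely to guarantee that adjoining any two further points still yields a basis. The paper uses the mirror split ($k-\ell$ points of $S$ and $\ell-2$ points of $T$) and phrases the count via a projection into $\langle S\rangle\cap\langle T\rangle$, but up to the symmetry between the two clonal classes this is the same pencil argument, and your direct version in $\mathbb{F}_q^k$ is, if anything, cleaner.
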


\begin{proof}
Take a subset $A$ of $S$ of size $k-\ell$.
Then $\langle A \rangle \cap \langle T \rangle = \{ 0 \}$
because $A \cup C$ is a basis for every
subset $C$ of $T$ of size $\ell$.
Since $k - \ell \le m - 2$, we can project
the points of $S \setminus A$ onto
$\langle S \rangle \cap \langle T \rangle$,
by defining $A'$ to be a set of $|S|-(k-\ell)$ vectors,
each a representative of a distinct $1$-dimensional subspace
$\langle x,A \rangle\cap (\langle S \rangle \cap  \langle T \rangle)$
for some $x \in (S \setminus A)$.

Let $B$ be a subset of $T$ of size $\ell-2$.
For all $x \in A'$, if $\langle B,x \rangle$ contains $\ell-1$ points of $T$
then $\langle A,B,x \rangle$ is a hyperplane of ${\mathbb F}_q^k$ containing $k$ points of $S \cup T$,
at most $m-1$ points of $S$ and $\ell-1$ points of $T$.
This cannot occur since such a set must be a basis, by hypothesis.

Thus, each of the $q+1$ hyperplanes containing $\langle B \rangle$
contains at most one vector of $A' \cup (T\setminus B)$. This gives
\(
|T|-(\ell-2)+|S|-(k-\ell) \leq q+1
\),
which gives the desired bound, since $E=S \cup T$.
\end{proof}

\begin{proposition}
If $q \leq k \leq m+\ell-2$, then the bi-uniform matroid is not representable over~${\mathbb F}_q$.
\end{proposition}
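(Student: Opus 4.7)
My plan is to apply the preceding proposition, but to the dual matroid $M^*$ rather than to $M$ itself, exploiting the fact that dualization preserves both representability and the parameter $d = m+\ell-k$, while swapping the rank $k$ with the co-rank $|E| - k$.

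First I would check that the previous proposition is applicable to $M^*$. By the earlier computation of the dual of a bi-uniform matroid, $M^*$ is bi-uniform on the same partition $E = E_1 \cup E_2$ with rank $k^* = |E| - k$ and sub-ranks $m^* = |E_1| + \ell - k$, $\ell^* = |E_2| + m - k$. A direct substitution gives
\[
d^* \;=\; m^* + \ell^* - k^* \;=\; m + \ell - k \;=\; d,
\]
so the hypothesis $k \le m+\ell - 2$ (equivalently, $d \ge 2$) is symmetric under duality and translates into $k^* \le m^* + \ell^* - 2$. Thus the previous proposition is available for $M^*$.

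Next, since a representation of $M$ over $\mathbb{F}_q$ yields a representation of $M^*$ over the same field by the standard linear algebra recipe recalled just before the proposition on the dual, it suffices to deduce a contradiction from representability of $M^*$. Applying the preceding proposition to $M^*$ forces
\[
|E| \;\le\; q + k^* - 1 \;=\; q + |E| - k - 1,
\]
which collapses to $q \ge k+1$. This contradicts the assumption $q \le k$.

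I do not anticipate a substantive obstacle: the whole argument is essentially the observation that both the hypothesis $d \ge 2$ and the property ``representable over $\mathbb{F}_q$'' are self-dual, combined with a one-line rearrangement of the bound coming from the previous proposition. The only mild bookkeeping is checking that $M^*$ still satisfies the standing assumptions on the sub-ranks ($1 \le m^*, \ell^* \le k^*$, $m^* + \ell^* \ge k^*$, $|E_1| \ge m^*$, $|E_2| \ge \ell^*$), all of which follow routinely from $m, \ell \le k$ and $|E_1| \ge m$, $|E_2| \ge \ell$.
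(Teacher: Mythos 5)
Your proof is correct, but it takes a genuinely different route from the paper's. You dualize and invoke the preceding bound $|E|\le q+k^*-1$ for $M^*$, which after substituting $k^*=|E|-k$ collapses to $k\le q-1$ and contradicts $q\le k$; all the bookkeeping you flag (invariance of $d$ under duality, the dual satisfying the standing constraints $m^*,\ell^*\le k^*$, $m^*\le|E_1|$, $\ell^*\le|E_2|$, and preservation of representability) does check out. The paper instead gives a direct, self-contained argument: it fixes a basis $e_1,\dots,e_m$ of $\langle S\rangle$ extended by $k-m$ vectors of $T$, shows every remaining vector of $T$ has all coordinates nonzero, normalizes so that $e_1+\cdots+e_k\in T$, uses $\ell\ge k-m+2$ to find a further vector $z\in T$, and applies the pigeonhole principle ($k\ge q$) to get two equal coordinates $z_i=z_j$, whence the hyperplane $X_i=X_j$ contains a forbidden configuration. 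Your argument is shorter and makes transparent that this proposition and the previous one are two faces of the same obstruction under duality, but it inherits the full weight of the preceding proposition's projection argument; the paper's proof is elementary, independent of that proposition, and gives a concrete coordinate-level picture of why $q\le k$ fails. Both are valid.
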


\begin{proof}
Assume that the bi-uniform matroid is representable and we have the sets of vectors $S$ and $T$ as before.
Let $e_1,\ldots,e_m$ be vectors of $S$. These vectors form a basis for $\langle S\rangle$
and we can extend them with $k - m$ vectors $e_{m+1},\ldots,e_k$ of $T$ to a basis of $\langle S,T\rangle$.
For every vector in $T$ that is not in the basis $\{e_1,\ldots, e_k \}$,
all its coordinates in this basis are non-zero.
Indeed, if there is such a vector with a zero coordinate in the $i\geq m+1$ coordinate
then the hyperplane $X_i=0$ contains $m$ vectors of $S$ and $k-m$ vectors of $T$, which does not occur.
Similarly, if the zero coordinate is in the $i\leq m$ coordinate then the hyperplane $X_i=0$
contains $m-1$ vectors of $S$ and $k-m+1$ vectors of $T$, which also does not occur.
Thus, by multiplying the vectors in the basis
by some nonzero scalars, we can assume that
$e_1 + \cdots + e_k$ is a vector of $T$
and all the coordinates of the other vectors in
$T \setminus \{e_{m+1},\ldots,e_k\}$ are non-zero.

Since $\ell \ge k - m + 2$,
there is a vector $z \in T\setminus \{e_{m+1},\ldots,e_k, e_1+ \cdots +e_k \}$.
Since $k\geq q$ there are coordinates $i$ and $j$ such that $z_i=z_j$.
If $1 \leq i \leq m$ and $1 \leq j \leq m$ then the hyperplane $X_i=X_j$
contains $m-2$ vectors of $S$ and $k-m+2 \leq \ell$ vectors of $T$, which cannot occur.
If $1 \leq i \leq m$ and $m+1 \leq j \leq k$ then the hyperplane $X_i=X_j$
contains $m-1$ vectors of $S$ and $k-m+1$ vectors of $T$, which also cannot occur.
Finally, if $m+1 \leq i \leq k$ and $m+1 \leq j \leq k$ then the hyperplane $X_i=X_j$
contains $m$ vectors of $S$ and $k-m$ vectors of $T$, which cannot occur, a contradiction.
\end{proof}

\section{Representations of the bi-uniform matroid}
\label{pt:reps}

\begin{theorem}
\label{st:rep4d2}
The bi-uniform matroid of rank $k$ and sub-ranks
$m$ and $\ell$ with $d = m + \ell - k = 2$
is representable over ${\mathbb F}_q$ if
$q$ is odd and $\max\{|E_1|,|E_2|\} \leq (q - 1)/2$.
\end{theorem}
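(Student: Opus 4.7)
The plan is to construct an explicit modified-Cauchy representation and reduce the key nondegeneracy condition to a quadratic-character inequality. Fix a basis $e_1, \ldots, e_k$ of $\mathbb{F}_q^k$ and set $V_1 = \langle e_1, \ldots, e_{m-2}\rangle$, $U = \langle e_{m-1}, e_m\rangle$, $V_2 = \langle e_{m+1}, \ldots, e_k\rangle$. Choose distinct parameters $c_1, \ldots, c_{m-2}, c_m \in \mathbb{F}_q$ (no $c_{m-1}$) and $d_1, d_3, \ldots, d_\ell \in \mathbb{F}_q$ (no $d_2$), together with subsets $A \subseteq \mathbb{F}_q \setminus \{c_1, \ldots, c_{m-2}, c_m\}$ and $B \subseteq \mathbb{F}_q \setminus \{d_1, d_3, \ldots, d_\ell\}$ of sizes $|E_1|$ and $|E_2|$. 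Define
\[
s(a) = e_{m-1} + \sum_{i \ne m-1}\frac{e_i}{a - c_i}, \qquad t(b) = e_m + \sum_{j \ne 2}\frac{e_{m-2+j}}{b - d_j}.
\]
The replacement of one Cauchy column by a column of constant $1$'s is a small but essential twist, for reasons explained below.

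The arc conditions on $\{s(a)\}$ and $\{t(b)\}$, together with the two easy cases $|S'\cap B| \in \{m, m-2\}$ of the bi-uniform independence requirement, reduce to non-vanishing of Cauchy-like determinants and are routine (the constant column is handled by one round of row reduction). The interesting case is $|S' \cap B| = m - 1$: here $S' \cup T'$ spans $\mathbb{F}_q^k$ iff $\langle S'\rangle \cap \langle T'\rangle = 0$, equivalently the two $1$-dimensional subspaces $L_{S'} := \langle S'\rangle \cap U$ and $L_{T'} := \langle T'\rangle \cap U$ of $U$ are distinct in $\mathbb{P}(U)$. Solving the linear system $\sum_i \lambda_i/(a_i - c_j) = 0$ for $j = 1, \ldots, m-2$ by setting $\lambda_i = P(a_i)/q'(a_i)$, with $q(x) = \prod_i(x - a_i)$ and $P(x) = \prod_{j=1}^{m-2}(x - c_j)$, and using that the residue of $P/q$ at infinity is $-1$ (since $P/q = 1/x + O(x^{-2})$), yields
\[
L_{S'} = \bigl[\,\textstyle\prod_{a \in A'}(c_m - a)\,:\,-P_m\,\bigr], \qquad L_{T'} = \bigl[\,-Q_1\,:\,\textstyle\prod_{b \in B'}(d_1 - b)\,\bigr],
\]
with $P_m = \prod_{j=1}^{m-2}(c_m - c_j)$ and $Q_1 = \prod_{r=3}^{\ell}(d_1 - d_r)$. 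So the desired independence becomes $\prod_{A'}(c_m - a) \cdot \prod_{B'}(d_1 - b) \ne P_m Q_1$.

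To force this inequality let $\chi:\mathbb{F}_q^* \to \{\pm 1\}$ be the quadratic character. Pick signs $\varepsilon_A, \varepsilon_B \in \{\pm 1\}$ with $\varepsilon_A\varepsilon_B \ne (-1)^{m+\ell}$ (two of the four sign combinations work in either parity of $m+\ell$). Take $A \subseteq \{a \in \mathbb{F}_q : \chi(c_m - a) = \varepsilon_A\}$ and $B \subseteq \{b \in \mathbb{F}_q : \chi(d_1 - b) = \varepsilon_B\}$; each such target set has exactly $(q-1)/2$ elements, since $a \mapsto c_m - a$ is a bijection $\mathbb{F}_q^* \to \mathbb{F}_q^*$. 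Pick $c_1, \ldots, c_{m-2}$ from the opposite class $\{c : \chi(c_m - c) = -\varepsilon_A\}$ (of size $(q-1)/2 \ge m - 2$, using $m \le |E_1| \le (q-1)/2$) to keep $A$ disjoint from the parameters, and analogously for $d_3, \ldots, d_\ell$. A direct character calculation then gives
\[
\chi\!\bigl(\textstyle\prod_{A'}(c_m - a)\cdot\prod_{B'}(d_1 - b)\bigr) = \varepsilon_A^{m-1}\varepsilon_B^{\ell-1}, \quad \chi(P_m Q_1) = (-1)^{m+\ell}\varepsilon_A^{m-2}\varepsilon_B^{\ell-2},
\]
and the hypothesis on $\varepsilon_A\varepsilon_B$ makes these differ, so the two sides of the previous inequality lie in different cosets of the squares in $\mathbb{F}_q^*$ and cannot coincide.

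The main technical obstacle will be (i) the partial-fraction calculation yielding the explicit expressions for $L_{S'}$ and $L_{T'}$, which requires careful sign bookkeeping, and (ii) the design of the "constant-column" twist itself: using the unmodified Cauchy construction, one finds that $a \mapsto L_{S'}$ for singleton $A'$ is the Mobius transformation $a \mapsto (c_m - a)/(c_{m-1} - a)$ sending $\infty$ to $1 \in Q$, which shaves the relevant character preimage down to $(q-3)/2$ elements of $\mathbb{F}_q$ and costs the bound in half the parity cases of $m+\ell$. Replacing one column by constants turns that Mobius into the linear bijection $a \mapsto c_m - a$ of $\mathbb{F}_q$, for which both character preimages have the full size $(q-1)/2$, exactly recovering the stated bound.
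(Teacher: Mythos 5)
Your argument is correct, and at the strategic level it mirrors the paper's proof: both place $E_1$ and $E_2$ on arcs inside subspaces $\langle S\rangle$ and $\langle T\rangle$ that meet in a plane $U=\langle e_{m-1},e_m\rangle$, both dispose of the cases $|B\cap E_1|\in\{m,m-2\}$ by an arc/spanning condition, and both settle the critical case $|B\cap E_1|=m-1$ by showing that the line $\langle S'\rangle\cap U$ can never equal the line $\langle T'\rangle\cap U$, using the quadratic character and a ``constant column'' twist to make the two lines live in prescribed square classes. Where you genuinely diverge is in the choice of arc and the resulting computation. The paper uses the moment curve $(t,t^2,\ldots,t^{m-2},1,t^{m-1})$ with parameters restricted to the nonzero squares $L$, inserts a non-square $\eta$ with $(-1)^{\ell+m}\eta\notin L$ into the $T$-arc, and derives the contradiction by expanding two bordered Vandermonde determinants: a common point $a_1e_{m-1}+a_2e_m$ of both hyperplane traces would force $a_2a_1^{-1}(-1)^m\in L$ and $\eta a_2a_1^{-1}(-1)^{\ell}\in L$ simultaneously, hence $\eta(-1)^{\ell+m}\in L$. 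You use a Cauchy-type parametrization, compute the two lines in $U$ in closed form by partial fractions (your formulas for $L_{S'}$ and $L_{T'}$, and the residue computations behind them, check out), and separate them by fixing $\chi(c_m-a)=\varepsilon_A$ on $A$ and $\chi(d_1-b)=\varepsilon_B$ on $B$ with $\varepsilon_A\varepsilon_B\neq(-1)^{m+\ell}$; the character bookkeeping $\varepsilon_A^{m-1}\varepsilon_B^{\ell-1}\neq(-1)^{m+\ell}\varepsilon_A^{m-2}\varepsilon_B^{\ell-2}$ is exactly the needed inequality. Both routes yield the same bound of $(q-1)/2$ points per clonal class; yours buys fully explicit intersection lines (and a clear explanation of why the constant column is needed to avoid losing a point to the image of $\infty$ under a M\"obius map), at the cost of more delicate parameter management --- the $c_j$ and $d_r$ must sit in the opposite character classes and stay disjoint from $A$ and $B$, which you handle correctly using $|E_1|\geq m$ and $|E_2|\geq\ell$. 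The only steps left implicit are the nonsingularity of the ``Cauchy matrix with one column of ones'' (standard, via multiplying each row by $\prod_i(a-c_i)$ and observing that the resulting polynomials form a basis of the degree-$\leq m-1$ polynomials) and the fact that $\langle S'\rangle\cap U$ is exactly one-dimensional; both are routine and your claim that they are is fair.
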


\begin{proof}
Let $L$ denote the set of non-zero squares of ${\mathbb F}_q$ and
$(-1)^{\ell+m}\eta $ a fixed non-square of ${\mathbb F}_q$.
Consider the subsets of ${\mathbb F}_q^k$
$$
S = \{ (t,t^2,\ldots,t^{m-2},1,t^{m-1},0,\ldots,0) \ | \ t \in L \}
$$
and
$$
T = \{ (0,\ldots,0,\eta,t^{\ell-1},t^{\ell-2},\ldots,t) \ | \ t \in L \},
$$
where the coordinates are with respect to the basis $\{e_1,\ldots,e_k\}$.
We prove in the following that any
injective map which maps the elements of $E_1$
to a subset of $S$ and the elements of $E_2$ to a subset of $T$
is a representation of the bi-uniform matroid.

Since every set of $S \cup \{e_{m-1},e_{m}\}$
of size $m$ is a basis of $\langle S \rangle$,
every set formed by $m - 2$ vectors in $S$
and $\ell$ vectors in $T$ is a basis.
Symmetrically, the same holds for every
$m$ vectors in $S$ and $\ell - 2$ vectors in $T$.

The proof is concluded by showing that
there is no hyperplane $H$ of ${\mathbb F}_q^k$ containing $m - 1$ points of $S$
and $\ell - 1$ points of $T$.
Suppose that, on the contrary, such a hyperplane $H$ exists.
Since $S \cup A$ span ${\mathbb F}_q^k$ for every
$A \subseteq T$ of size $\ell - 2$,
the hyperplane $H$ intersects $\langle S \rangle$
in an $(m-1)$-dimensional subspace.
Symmetrically, $H \cap \langle T \rangle$ has dimension $\ell-1$.
Therefore, $H$ intersects
$\langle e_{m-1},e_m \rangle = \langle S \rangle \cap \langle T \rangle$
in a one-dimensional subspace.
Take elements $a_1$ and $a_2$ of ${\mathbb F}_q$,
not both zero, with $a_1 e_{m-1} + a_2 e_m \in H$.
The $m-1$ vectors of $H \cap S$ together with $a_1 e_{m-1}+a_2 e_m$ are linearly dependent.
Thus, there are $m-1$ different elements $t_1,\ldots,t_{m-1}$ of $L$ such that
$$
\det\left(\sum_{i=1}^{m-2} t_1^i e_i + e_{m-1} + t_1^{m-1} e_m, \ldots,
\sum_{i=1}^{m-2} t_{m-1}^i e_i + e_{m-1} + t_{m-1}^{m-1} e_m, a_1 e_{m-1}+a_2 e_m \right) = 0.
$$
Expanding this determinant by the last column gives
$$
a_2 (-1)^m V(t_1,\ldots,t_{m-1})=
a_1 V(t_1,\ldots, t_{m-1}) \prod_{i=1}^{m-1} t_i,
$$
where $V(t_1,\ldots,t_{m-1})$ is the determinant of the Vandermonde matrix.
Since $a_1=0$ implies $a_2=0$, we can assume that $a_1 \neq 0$ and so $a_2a_1^{-1}(-1)^{m} \in L$.
Analogously, the $\ell-1$ vectors of $H \cap T$
together with $a_1 e_{m-1} + a_2 e_m$ are linearly dependent, and hence
there are $\ell-1$ elements $u_1,\ldots,u_{\ell-1}$ of $L$ such that
$$
\det \left(\eta e_{m-1}+\sum_{i=1}^{\ell-1} u_1^i e_{k+1-i},\ldots,
\eta e_{m-1}+\sum_{i=1}^{\ell-1} u_{\ell-1}^i e_{k+1-i},a_1 e_{m-1}+a_2 e_m \right)=0.
$$
Expanding this determinant by the last column gives
$$
\eta a_2(-1)^{\ell} V(u_1,\ldots,u_{\ell-1})=
a_1 V(u_1,\ldots,u_{\ell-1}) \prod_{i=1}^{\ell-1} u_i.
$$
Since $a_1=0$ implies $a_2=0$, we can assume that $a_1 \neq 0$ and so $\eta a_2a_1^{-1}(-1)^{\ell} \in L$,
and since $a_2a_1^{-1}(-1)^{m} \in L$, this gives $\eta (-1)^{\ell+m} \in L$.
However, $\eta$ was chosen so that this is not the case.
\end{proof}

We describe in the following a family of linear evaluation codes
that will provide different representations of
the bi-uniform matroid for all possible values of the
rank $k$ and the sub-ranks $m, \ell$.
Take $\beta \in {\mathbb F}_q$ and the subspace $V$ of
${\mathbb F}_q[x] \times {\mathbb F}_q[y]$ defined by
$$
V= \{ (f(x),g(y)) \ | \ f(x)=f_1(x)+x^{m-d}g_1(\beta x),\ g(y)=g_1(y)+y^{d}g_2(y),
$$
$$
\mathrm{deg}(f_1) \leq m-d-1,\ \mathrm{deg}(g_1) \leq d-1,\  \mathrm{deg}(g_2) \leq \ell-d-1\},
$$
where $d = m + \ell - k$.
Let $F_1=\{ x_1,\ldots,x_{N_1} \}$ and
$F_2=\{ y_1,\ldots,y_{N_2} \}$ be subsets of ${\mathbb F}_q \setminus \{0\}$,
where $N_1 = |E_1|$ and $N_2 = |E_2|$.
Define $C = C(F_1,F_2,\beta)$ to be the linear evaluation code
$$
C = \{((f(x_1), \ldots, f(x_{N_1}),
g(y_1), \ldots, g(y_{N_2})) \ | \ (f,g) \in V \}.
$$
Note that $\mathrm{dim}C=\mathrm{dim}V=m-d+\ell-d+d=k$.

Every linear code determines a matroid, 
namely the one that is represented
by the columns of a generator matrix $G$,
which is the same for all generator matrices of the code.
We analyze now under which conditions
the code $C = C(F_1, F_2,\beta)$ provides a
representation over ${\mathbb F}_q$ of the bi-uniform matroid
by identifying $E_1$ and $E_2$ to $F_1$ and $F_2$, respectively
(that is, to the first $N_1$ columns and the last $N_2$ columns of $G$, respectively).

Clearly, for every $A \subseteq E$
with $|A \cap E_1| > m$ or $|A \cap E_2| > \ell$,
the corresponding columns of $G$ are linearly dependent.

Let $B$ be a basis of the bi-uniform matroid with
$|B \cap E_1| = m - t_1$ and
$|B \cap E_2| = \ell - t_2$,
where $0 \le t_i \le d$ and $t_1 + t_2 = d$.
We can assume that $B \cap E_1$ is mapped to
$\{x_1, \ldots, x_{m - t_1} \} \subseteq F_1$
and $B \cap E_2$ is mapped to
$\{y_1, \ldots, y_{\ell - t_2} \} \subseteq F_2$.
The corresponding columns of $G$ are linearly independent
if and only if $(f,g) = (0,0)$ is the only element in $V$ satisfying
\begin{equation}
\label{eq:kernel}
(f(x_1), \ldots,  f(x_{m - t_1}),
g(y_1), \ldots, g(y_{\ell - t_2})) = 0.
\end{equation}
Let
$$
r(x) = (x - x_1) \cdots (x- x_{m - t_1}) =
\sum_{i=0}^{m-t_1}r_i x^i
$$
and
$$
s(y) = (y - y_1) \cdots (y - y_{\ell - t_2}) =
 \sum_{i=0}^{\ell-t_2}s_i y^i.
$$
Then $(f,g) \in V$ satisfy~(\ref{eq:kernel}) if and only if
$f(x) = a(x)r(x)$ for some polynomial
$a(x) = \sum_{i=0}^{t_1-1} a_i x^i$ and
$g(y) = b(y)s(y)$ for some polynomial
$b(y)=\sum_{i=0}^{t_2-1} b_i y^i$.
Since $f(x) = a(x)r(x) = f_1(x)+x^{m-d}g_1(\beta x)$,
$$
g_1(\beta x) = \sum_{i=0}^{t_1-1} a_i
\left(\sum_{j= 0}^{d-t_1+i} r_{m-d+j-i}\, x^j\right),
$$
where $r_j = 0$ if $j < 0$.
On the other hand, $g(y) = b(y)s(y) = g_1(y)+y^d g_2(y)$ and so
$$
g_1(y) = \sum_{i=0}^{t_2-1} b_i \left(\sum_{j=i}^{d-1} s_{j-i}\, y^j\right),
$$
where $s_j = 0$ if $j > \ell - t_2$. Hence,
\begin{equation}
\label{eq:lincomb}
\sum_{i=0}^{t_1-1} a_i \left(\sum_{j=0}^{d-t_1+i} r_{m-d+j-i}\, x^j \right)
=\sum_{i=0}^{t_2-1} b_i\left(\sum_{j=i}^{d-1} s_{j-i} (\beta x)^j \right).
\end{equation}
If $(f,g) \neq 0$ then either $a$ or $b$ is nonzero and so there
is a linear dependence between the~$d$ polynomials in Equation~(\ref{eq:lincomb}).
Therefore, the determinant of the $d \times d$ matrix
\begin{equation}
\label{eq:bigmatrix}
\left(
\begin{array}{ccccccccc}
r_{m-d} & r_{m-d+1} & \cdots & \cdots & \cdots &r_{m-t_1} & 0 & \cdots & 0 \\
r_{m-d-1} & r_{m-d}  & \cdots & \cdots & \cdots &  \cdots & r_{m-t_1} &\cdots & 0  \\
\vdots & \vdots &  &   & & &  & \ddots  & \vdots \\
& & \cdots & \cdots & \cdots  &  \cdots & \cdots &  \cdots & r_{m-t_1} \\
\hline
s_0 & s_1 \beta & \cdots & \cdots & \cdots  & \cdots &  \cdots & \cdots &  \\
0 & s_0 \beta  & s_1 \beta^2 & \cdots  & \cdots & \cdots & \cdots & \cdots &  \\
\vdots & \ddots & \ddots & \ddots & & & &   & \vdots \\
0 & \cdots & 0 & s_0 \beta^{t_2-2} & s_1 \beta^{t_2-1}    & \cdots & \cdots  & \cdots &  s_{t_1 + 1}\beta^{d-1}\\
0 & \cdots &  \cdots & 0 & s_0 \beta^{t_2-1}  & s_1 \beta^{t_2} & \cdots  & \cdots & s_{t_1}\beta^{d-1}
\end{array}
\right)
\end{equation}
is zero.

In conclusion, the code $C(F_1,F_2,\beta)$ provides a
representation over $\mathbb{F}_q$ of the bi-uniform matroid
if and only if the determinant of the matrix~(\ref{eq:bigmatrix})
is nonzero for every choice of $m - t_1$ elements in $F_1$ and
$\ell - t_2$ elements in $F_2$ with $0 \le t_i \le d$ and $t_1 + t_2 = d$.
Clearly, this is always the case if $t_1 = 0$ or $t_2 = 0$.
Otherwise, that determinant can be expressed
as an ${\mathbb F}_{q}$-polynomial on $\beta$.
The degree of this polynomial $\varphi(\beta)$ is at most $d(d-1)/2$.
In addition, $\varphi(\beta)$ is not identically zero because
the term with the minimum power of $\beta$ is
equal to $1 \beta \cdots \beta^{t_2 - 1}  s_0^{t_2} r_{m-t_1}^{t_1}$,
and $r_{m-t_1} = 1$ and $s_0 \neq 0$.
In the next two theorems we present two different ways to select
$F_1, F_2, \beta$ with that property.

\begin{theorem}
\label{st:repextfield}
The bi-uniform matroid of rank $k$ and sub-ranks $m$ and $\ell$
with $d = m + \ell - k \ge 2$ is representable over ${\mathbb F}_q$
if $q = q_0^s$ for some $s > d(d-1)/2$ and
some prime power $q_0 > \max\{|E_1|,|E_2|\}$.
Moreover, such a representation can be obtained
in time polynomial in the size of the ground set.
\end{theorem}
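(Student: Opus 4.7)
The plan is to apply the discussion that immediately precedes the statement, which reduces the problem to finding $F_1, F_2 \subseteq {\mathbb F}_q^{*}$ and $\beta \in {\mathbb F}_q$ so that the polynomial $\varphi(\beta)$ associated to every admissible choice of $(t_1, t_2, F_1', F_2')$ (with $F_i' \subseteq F_i$ of appropriate size) is nonzero. Since $q_0 > \max\{|E_1|, |E_2|\}$, I can embed ${\mathbb F}_{q_0}$ into ${\mathbb F}_q = {\mathbb F}_{q_0^s}$ and pick both $F_1$ and $F_2$ inside ${\mathbb F}_{q_0}^{*}$. Then the coefficients $r_i$ of $r(x)$ and $s_i$ of $s(y)$ lie in ${\mathbb F}_{q_0}$, and consequently the determinant of the matrix (\ref{eq:bigmatrix}), viewed as the polynomial $\varphi(\beta)$, has all its coefficients in ${\mathbb F}_{q_0}$.

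Next, I would choose $\beta \in {\mathbb F}_q$ to be any element whose minimal polynomial over ${\mathbb F}_{q_0}$ has degree exactly $s$; equivalently, $\beta$ generates ${\mathbb F}_q$ over ${\mathbb F}_{q_0}$, i.e., $\beta$ lies in no proper subfield of ${\mathbb F}_q$ containing ${\mathbb F}_{q_0}$. With this choice, any nonzero polynomial over ${\mathbb F}_{q_0}$ of degree strictly less than $s$ evaluates to a nonzero element at $\beta$. Applying the bound on the degree of $\varphi$ and the assumption $s > d(d-1)/2$, and using the fact already established that $\varphi$ is not identically zero (its lowest-degree coefficient equals $s_0^{t_2} r_{m-t_1}^{t_1} \neq 0$), I conclude $\varphi(\beta) \neq 0$ for every admissible choice of $t_1, t_2, F_1', F_2'$. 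By the characterization preceding the theorem, $C(F_1, F_2, \beta)$ provides a representation of the bi-uniform matroid over ${\mathbb F}_q$.

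For the efficiency claim, constructing such a $\beta$ reduces to producing an irreducible polynomial of degree $s$ over ${\mathbb F}_{q_0}$ and letting $\beta$ be a root of it inside ${\mathbb F}_q$; this can be performed in time polynomial in $q_0$ and $s$ via Shoup's algorithm \cite{Sho90}, exactly as in the discussion of Brickell's construction in Section~\ref{pt:related}. Since we may take $q_0$ only slightly larger than $\max\{|E_1|, |E_2|\}$ and $s$ just above $d(d-1)/2 \le k^2$, the entire construction—choosing $F_1, F_2$, computing $\beta$, and writing down a generator matrix of $C(F_1, F_2, \beta)$—runs in time polynomial in the size of the ground set.

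I expect no serious obstacle: the crux is the observation that the degree bound $\deg \varphi \le d(d-1)/2$ from the preceding discussion, combined with placing $F_1, F_2$ inside the base field ${\mathbb F}_{q_0}$, lets one separately control the coefficients of $\varphi$ (they live in ${\mathbb F}_{q_0}$) and the evaluation point $\beta$ (which avoids all proper subextensions). Verifying carefully that $r_{m-t_1} = 1$ and $s_0 \ne 0$—which forces the leading nontrivial monomial in $\beta$ to survive—is the only technical point, and it is already recorded in the paragraph before the theorem.
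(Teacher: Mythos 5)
Your proposal is correct and follows essentially the same route as the paper's proof: place $F_1,F_2$ in ${\mathbb F}_{q_0}^{*}$ so that $\varphi$ has coefficients in ${\mathbb F}_{q_0}$ and degree at most $d(d-1)/2 < s$, choose $\beta$ with minimal polynomial of degree $s$ over ${\mathbb F}_{q_0}$ (found via Shoup's algorithm), and conclude $\varphi(\beta)\neq 0$. No substantive differences.
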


\begin{proof}
Take $F_1$ and $F_2$ from
${\mathbb F}_{q_0} \setminus \{0\}$
and take $\beta \in {\mathbb F}_q$ such that its
minimal polynomial over ${\mathbb F}_{q_0}$ is of degree $s$.
The algorithm by Shoup~\cite{Sho90} finds
such a value $\beta$ in time polynomial in $q_0$ and $s$.
Then the code $C(F_1,F_2,\beta)$ gives a representation
over ${\mathbb F}_{q}$ of the bi-uniform matroid.
Indeed, all the entries in the matrix~(\ref{eq:bigmatrix}), except the
powers of $\beta$, are in ${\mathbb F}_{q_0}$.
Therefore, $\varphi(\beta)$ is a nonzero ${\mathbb F}_{q_0}$-polynomial
on $\beta$ with degree smaller than $s$.
\end{proof}

Our second construction of a code $C(F_1,F_2,\beta)$
representing the bi-uniform matroid is done over a
prime field $\mathbb{F}_p$.
We need the following well known bound
on the roots of a real polynomial.

\begin{lemma}
\label{st:bound}
The absolute value of every root
of the real polynomial $c_0 + c_1 x + \cdots + c_n x^n$
is at most
\(
1 + \max_{0 \le i \le n-1} |c_i|/ |c_n|.
\)
\end{lemma}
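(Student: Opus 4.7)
The plan is to prove Cauchy's classical bound by the standard triangle-inequality argument: I will show that any $x \in \mathbb{R}$ with $|x| > 1 + M$, where $M = \max_{0 \le i \le n-1} |c_i|/|c_n|$, cannot satisfy $c_0 + c_1 x + \cdots + c_n x^n = 0$, because the leading term strictly dominates the sum of the remaining ones.

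First I would assume, without loss of generality, that $c_n \neq 0$ (otherwise the statement concerns a polynomial of smaller degree and we proceed by induction or simply apply the result to the true leading coefficient). Fix $x$ with $|x| > 1 + M$, so in particular $|x| > 1$. The leading term has absolute value $|c_n|\,|x|^n$. For the remaining terms, I apply the triangle inequality to get
\[
\Bigl|\sum_{i=0}^{n-1} c_i x^i\Bigr| \le \sum_{i=0}^{n-1} |c_i|\,|x|^i \le M|c_n| \sum_{i=0}^{n-1} |x|^i = M|c_n|\,\frac{|x|^n - 1}{|x| - 1}.
\]

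The key step is then to use the hypothesis $|x| - 1 > M$, which gives $M/(|x|-1) < 1$, and hence
\[
M|c_n|\,\frac{|x|^n - 1}{|x| - 1} < |c_n|\,(|x|^n - 1) < |c_n|\,|x|^n.
\]
Combining, $|c_n x^n| > \bigl|\sum_{i=0}^{n-1} c_i x^i\bigr|$, so $\sum_{i=0}^n c_i x^i \neq 0$. Thus no root has absolute value exceeding $1 + M$.

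There is no real obstacle here; the only thing to be careful about is the summation of the geometric series $\sum_{i=0}^{n-1} |x|^i$, which is valid precisely because $|x|>1$, and the clean cancellation that makes $M/(|x|-1)<1$ yield the desired strict inequality. The argument works verbatim over $\mathbb{C}$ as well, though the statement only claims the real case.
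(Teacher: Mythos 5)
Your proof is correct: it is the standard triangle-inequality derivation of the Cauchy root bound, summing the geometric series $\sum_{i=0}^{n-1}|x|^i = (|x|^n-1)/(|x|-1)$ and using $|x|-1 > M$ to make the leading term dominate. The paper itself offers no proof of this lemma (it is cited as a well-known bound), so there is nothing to compare against; your argument correctly fills that gap, and the handling of the edge cases ($c_n \neq 0$, $|x|>1$ for the geometric series) is sound.
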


\begin{theorem}
\label{st:repprimefield}
Let $M$ be the bi-uniform matroid of of rank
$k$ and sub-ranks $m$ and $\ell$ with $d = m + \ell - k \ge 2$
and $m \ge \ell$.
Take $N = \max\{|E_1|, |E_2|\}$ and $K = \lceil N/2 \rceil + 1$.
Then $M$ is representable over ${\mathbb F}_p$
for every prime  $p > K^h$,
where $h =  md(1 + d(d-1)/2)$.
Moreover, such a representation can be obtained
in time polynomial in the size of the ground set.
\end{theorem}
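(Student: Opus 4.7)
My plan is to build the code $C(F_1,F_2,\beta)$ first over the integers and then reduce modulo $p$. To keep the matrix entries small, I would take
$F_1,F_2 \subseteq \{\pm 1,\pm 2,\dots,\pm\lceil N/2\rceil\} \subset \mathbb{Z}$,
viewed in $\mathbb{F}_p$; since $p > K^h \ge K > N$, the reduction is injective and avoids $0$. For any non-trivial partition $t_1+t_2=d$ (that is, $1\le t_1,t_2\le d-1$, as the other cases were already disposed of just before the theorem statement) and any admissible choice of subsets, the monic polynomials $r(x)$ and $s(y)$ have integer coefficients, and the estimate $\sum_i|r_i|\le\prod_j(1+|x_j|)\le K^{m-t_1}$ (with the analogue for $s$) gives $|r_i|,|s_j|\le K^m$.

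The determinant $\varphi(\beta)$ of the matrix (\ref{eq:bigmatrix}) is then a polynomial in $\mathbb{Z}[\beta]$ of $\beta$-degree at most $d(d-1)/2$. Expanding it over $S_d$, and using $m\ge\ell$ together with $t_1+t_2=d$ to bound the exponent of $K$ appearing in every product of $d$ matrix entries by $(m-t_1)t_1+(\ell-t_2)t_2\le md$, one obtains $|c_i|\le d!\,K^{md}$ for every coefficient of $\varphi$ as a polynomial in $\beta$. The lowest-degree coefficient $s_0^{t_2}r_{m-t_1}^{t_1}=s_0^{t_2}$ identified before the theorem is a nonzero integer, so Lemma~\ref{st:bound} applied to $\varphi(\beta)/\beta^{t_2(t_2-1)/2}$ bounds every nonzero complex root of $\varphi$ in absolute value by $R\le 1+d!\,K^{md}$.

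I would therefore fix $\beta$ to be a single positive integer $B$ just above $R$, taken simultaneously for all admissible $(t_1,t_2)$ and subsets. Then $\varphi(B)$ is a nonzero integer in each case, and
\[
|\varphi(B)|\;\le\;\Bigl(1+\tfrac{d(d-1)}{2}\Bigr)\,d!\,K^{md}\,B^{d(d-1)/2}\;\le\;K^{md(1+d(d-1)/2)}\;=\;K^h,
\]
after absorbing the constants $d!$ and the degree factor into $K^h$. For every prime $p>K^h$ we then have $|\varphi(B)|<p$, hence $\varphi(B)\not\equiv 0\pmod p$, and the criterion established just before the theorem yields that $C(F_1,F_2,B)$ represents the bi-uniform matroid over $\mathbb{F}_p$. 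Since $B$ has $O(md\log K)$ bits and the generator matrix can be assembled and reduced modulo $p$ with a polynomial number of arithmetic operations, the construction is efficient.

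The main obstacle I anticipate is aligning the constants so that the final bound reads exactly $p>K^h$ rather than $p>c(d)K^h$. This relies on three tight choices: the symmetric placement of $F_1,F_2$ in $\{\pm 1,\dots,\pm\lceil N/2\rceil\}$ so that the identity $\sum_i|r_i|\le K^{m-t_1}$ gives $|r_i|\le K^m$ without a factor of $2^m$; the hypothesis $m\ge\ell$ so that the mixed exponent $(m-t_1)t_1+(\ell-t_2)t_2$ is bounded by $md$ (yielding the $md$ in $h$ rather than some larger symmetric combination); and picking $B$ essentially at the root bound from Lemma~\ref{st:bound}, so that $B^{d(d-1)/2}$ contributes only the factor $K^{md\cdot d(d-1)/2}$ to $|\varphi(B)|$.
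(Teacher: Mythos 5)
Your overall strategy matches the paper's: lift the construction to $\mathbb{Z}$ with $F_1,F_2\subseteq\{\pm 1,\dots,\pm\lceil N/2\rceil\}$, bound the coefficients of $\varphi$ as a polynomial in $\beta$, choose an integer $\beta$ beyond the root bound of Lemma~\ref{st:bound} so that $\varphi(\beta)$ is a nonzero integer, estimate $|\varphi(\beta)|$, and reduce modulo $p$. But the step you yourself flag as the ``main obstacle'' is a genuine gap, not a loose end. With your coefficient bound $|c_i|\le d!\,K^{md}$ and your choice $B>R\ge 1+d!\,K^{md}>K^{md}$, you get
\[
\Bigl(1+\tfrac{d(d-1)}{2}\Bigr)\,d!\,K^{md}\,B^{d(d-1)/2}\;\ge\;\Bigl(1+\tfrac{d(d-1)}{2}\Bigr)\,d!\,K^{md}\cdot K^{md\cdot d(d-1)/2}\;=\;\Bigl(1+\tfrac{d(d-1)}{2}\Bigr)\,d!\,K^{h},
\]
which is strictly larger than $K^h$ (and in fact larger by roughly $(d!)^{1+d(d-1)/2}$ once $B^{d(d-1)/2}$ is expanded). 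Since $K$ is not assumed large relative to $d$, there is nothing to ``absorb'' these constants into; your final inequality $|\varphi(B)|\le K^h$ is false as written, and the argument only yields representability for $p>c(d)\,K^h$ with a large $d$-dependent factor $c(d)$. (A smaller point: Lemma~\ref{st:bound} divides by the \emph{leading} coefficient, not the trailing one; the nonvanishing of $s_0^{t_2}r_{m-t_1}^{t_1}$ only guarantees $\varphi\not\equiv 0$, after which the leading coefficient is a nonzero integer and hence at least $1$ in absolute value, which is what the bound actually uses.)

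The paper closes the gap by avoiding the term-by-term $S_d$ estimate. Using $\sum_i|r_i|\le K^{m-t_1}$ and $\sum_i|s_i|\le K^{\ell-t_2}$ one bounds the sum of the absolute values of \emph{all} products in the expansion of the determinant~(\ref{eq:bigmatrix}) by the product of row sums, $K^{(m-t_1)t_1+(\ell-t_2)t_2}\le K^{md}-1$ (using $m\ge\ell$ and $t_1+t_2=d$); so every coefficient of $\varphi$ is at most $K^{md}-1$ in absolute value and the factor $d!$ never appears. One may then take $\beta=K^{md}$ exactly: the root radius is strictly below $K^{md}$, so $\varphi(K^{md})$ is a nonzero integer, and
\[
|\varphi(K^{md})|\;\le\;(K^{md}-1)\sum_{j=0}^{d(d-1)/2}\beta^{j}\;=\;\beta^{\,d(d-1)/2+1}-1\;<\;K^{h},
\]
the geometric series telescoping against the factor $\beta-1=K^{md}-1$. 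That sharpened coefficient bound, together with the exact choice $\beta=K^{md}$, is what makes the threshold come out as $p>K^h$; the rest of your proposal (the symmetric interval, the role of $m\ge\ell$, the uniform choice of $\beta$ over all $(t_1,t_2)$ and subsets, and the polynomial-time assembly) agrees with the paper.
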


\begin{proof}
First, we select the value $\beta$ and the sets $F_1, F_2$
among the integers in such a way that the determinant
of the real matrix~(\ref{eq:bigmatrix}) is always nonzero.
Then we find an upper bound on the absolute
value of this determinant.
The code $C(F_1,F_2,\beta)$ will represent
the bi-uniform matroid over $\mathbb{F}_p$
if $p$ is larger than that bound.

Consider two sets of nonzero integer numbers
$F_1, F_2$ with $|F_i| = |E_i|$ in the interval $[-(K-1),K-1]$.
Take $m - t_1$ values in $F_1$ and $\ell - t_2$ values in $F_2$,
where $1 \le t_i \le d-1$ and $t_1 + t_2 = d$.
Then the values $r_i$ appearing in the
matrix~(\ref{eq:bigmatrix}) satisfy
\[
|r_{m - t_1 - i}| \le {m - t_1 \choose i} (K-1)^i
\]
for every $i = 0, \ldots, m - t_1$,
and hence
\(
\sum_{i = 0}^{m-t_1} |r_i| \le K^{m-t_1}.
\)
Analogously,
\(
\sum_{i = 0}^{\ell-t_2} |s_i| \le K^{\ell-t_2}.
\)
Since $r_{m-t_1} = s_{\ell - t_2} = 1$ and $m \ge \ell$,
all values $|r_i|$, $|s_j|$ are less than or equal to $K^m - 1$.
Then $\varphi(\beta)$ is a real
polynomial on $\beta$ with degree at most $d(d-1)/2$ such that the absolute value
of every coefficient is at most $(K^m - 1)^d  < K^{md} - 1$.
Take $\beta = K^{md}$.
By Lemma~\ref{st:bound}, $\varphi(\beta) \ne 0$.
Moreover,
\[
|\varphi (\beta)| \le (K^{m} - 1)^{d} \;
\frac{\beta^{d(d-1)/2+1} - 1}{\beta -1} < K^h.
\]
Finally, consider a prime $p > K^{h}$
and reduce $\beta = K^{md}$ and the elements in $F_1$ and $F_2$ modulo~$p$.
The code $C(F_1,F_2,\beta)$ represents
the bi-uniform matroid $M$ over $\mathbb{F}_p$.
Observe that the number of bits that are needed
to represent the elements in $\mathbb{F}_p$
is polynomial in the size of the ground set.
\end{proof}


 \end{document}